\newtheorem{tw}{Theorem}[section]
\newtheorem{pr}[tw]{Proposition}
\newtheorem{lm}[tw]{Lemma}
\newtheorem{cor}[tw]{Corollary}
\DeclareMathOperator{\cha}{char}
\DeclareMathOperator{\jac}{jac}
\DeclareMathOperator{\dgcd}{dgcd}
\author{Piotr J\k edrzejewicz\\
\normalsize Faculty of Mathematics and Computer Science\\
\normalsize Nicolaus Copernicus University\\
\normalsize Toru\'{n}, Poland}
\title{Irreducible Jacobian derivations\\
in positive characteristic}
\date{}
\begin{document}

\maketitle

\begin{abstract}
We prove that an irreducible polynomial derivation
in positive characteristic
is a Jacobian derivation if and only if
there exists an $n-1$-element $p$-basis
of its ring of constants.
In the case of two variables we characterize
these derivations in terms of their divergence
and some nontrivial constants.
\end{abstract}

\begin{table}[b]\footnotesize\hrule\vspace{1mm}
Keywords: Jacobian derivation, ring of constants, $p$-basis.\\
2010 Mathematics Subject Classification:
Primary 13N15, Secondary 13F20.
\end{table}

\section{Introduction}

Nowicki and Nagata proved in \cite{NN}, Theorem~2.8
(see also \cite{polder}, Theorem~7.1.4),
that the ring of constants of a polynomial derivation
in two variables over a field of zero characteristic
is generated by a single element.
They also obtained an analog of this fact
in characteristic $2$ (\cite{NN}, Proposition~4.2).
Namely, they showed that the ring of constants
of a $k$-derivation of the polynomial algebra $k[x,y]$,
where $k$ is a field of characteristic $2$,
is generated by a single element over $k[x^2,y^2]$.
Nowicki and Nagata observed that this property
does not hold in characteristic $p>2$
(\cite{NN}, Example~4.3).

\medskip

In \cite{phomogen}, Theorem~5.6, the present author obtained
a characterization of $p$-ho\-mo\-gene\-ous derivations
of the polynomial algebra $k[x,y]$,
where $k$ is a field of characteristic $p>0$,
such that the ring of constants is generated
by a single element over $k[x^p,y^p]$.
These results were partially generalized to derivations
homogeneous with respect to arbitrary weights
in \cite{ontwovar}, Theorem 11.
The characterization of such single generators
(that is, one-element $p$-bases)
was obtained in \cite{charoneel}, Theorem~4.2,
while the characterization of many-element $p$-bases
was obtained in \cite{charpbases}, Theorem~4.4.
In this paper we apply these new results
from \cite{charoneel} and \cite{charpbases}
to some older questions
from \cite{phomogen} and~\cite{ontwovar}.

\medskip

In Theorem~\ref{t1} we consider a $K$-derivation $d$
of the polynomial algebra $K[x_1,\dots,x_n]$
over a unique factorization domain $K$ of characteristic $p>0$,
such that $d(x_1)$, $\dots$, $d(x_n)$ are coprime.
We prove that $d$ is a Jacobian derivation
if and only if its ring of constants has
a $p$-basis over $K[x_1^p,\dots,x_n^p]$
consisting of $n-1$ elements.
In Theorem~\ref{t2} we obtain a generalization
of Theorem~5.6 from \cite{phomogen}
to arbitrary polynomial derivations in two variables
in positive characteristic.

\section{Preliminary definitions and facts}

Throughout this paper by a domain we mean
a commutative ring with unity, without zero divisors.
Let $A$ be a domain of characteristic $p>0$.

\medskip

If $d$ is a derivation of a $A$,
then its kernel is called the ring of constants,
and is denoted by $A^d$.
Note that if $A$ is a $K$-algebra,
where $K$ is a domain of characteristic $p>0$,
then every $K$-derivation of $A$ is a $KA^p$-derivation,
where $A^p=\{a^p,\,a\in A\}$.
Recall (\cite{phomogen}, Theorem~1.1
and \cite{eigen}, Theorem~2.5)
that if $A$ is finitely generated as a $K$-algebra,
then a subring $R\subset A$ is ring of constants
of some $K$-derivation of $A$
if and only if it satisfies the conditions
$$KA^p\subset R\hspace{3mm}\mbox{and}\hspace{3mm}
R_0\cap A=R,$$
where $R_0$ denotes the field of fractions of $R$.
Note also that every $K$-derivation $d$ of $A$
can be uniquely extended to a $K_0$-derivation $\delta$
of the field $A_0$ in such a way that
$\delta\big(\frac{a}{b}\big)=\frac{ad(b)-bd(a)}{b^2}$
for $a,b\in A$, $b\neq 0$.
If $d$ is a $K$-derivation of the polynomial $K$-algebra
$A=K[x_1,\dots,x_n]$, where $K$ is a UFD,
and $g=\gcd(d(x_1),\dots,d(x_n))$,
then $d'=\frac1{g}d$ is an irreducible $K$-derivation of $A$,
that is, $d'(x_1)$, $\dots$, $d'(x_n)$ are coprime.
Note that derivations $d$ and $d'$ have
the same ring of constants,
so, if we are interested in rings of constants,
it is enough to consider irreducible derivations.

\medskip

Let $B$ be a subring of $A$, containing $A^p$.
Recall the definitions of $p$-independence
and a $p$-basis (\cite{Matsumura}, p.~269).
The elements $z_1,\dots,z_m\in A$ are called
$p$-independent over $B$, if the elements
of the form $z_1^{i_1}\ldots z_m^{i_m}$,
where $0\leqslant i_1,\dots,i_m<p$,
are linearly independent over $B$.
Note that $z_1,\dots,z_m$ are $p$-independent over $B$
if and only if the degree of the field extension
$B_0\subset B_0(z_1,\dots,z_m)$ equals $p^m$.
The elements $z_1,\dots,z_m\in A$ are called
a $p$-basis of $A$ over $B$, if the elements
of the form $z_1^{i_1}\ldots z_m^{i_m}$,
where $0\leqslant i_1,\dots,i_m<p$,
form a basis of $A$ as a $B$-module.
Note that $z_1,\dots,z_m$
form a $p$-basis of $A$ over $B$
if and only if they are $p$-independent over $B$
and generate $A$ as a $B$-algebra.
If $A$ and $B$ are fields,
the degree of the extension $B\subset A$ equals $p^m$,
and elements $z_1,\dots,z_m\in A$
are $p$-independent over $B$,
then $z_1,\dots,z_m$ form a $p$-basis of $A$ over $B$.
If $z_1,\dots,z_m$ form a $p$-basis of the domain $A$
over $B$, then every element $a\in A$
can be uniquely presented in the form
$$a=\sum_{0\leqslant i_1,\dots,i_m<p}b_{i_1,\dots,i_m}
z_1^{i_1}\ldots z_m^{i_m},$$
where $b_{i_1,\dots,i_m}\in B$.
Moreover, if $z_1,\dots,z_m$ form a $p$-basis of $A$ over $B$,
then for every $g_1,\dots,g_m\in A$ there exists
a unique $B$-derivation $d$ of $A$ such that
$d(z_i)=g_i$ for $i=1,\dots,m$.
So, if $d$ and $d'$ are $B$-derivations of $A$
such that $d(z_i)=d'(z_i)$ for $i=1,\dots,m$,
then $d=d'$.

\medskip

Let $A=K[x_1,\dots,x_n]$ be the polynomial $K$-algebra
in $n$ variables, where $K$ is a UFD of characteristic $p>0$,
put $B=K[x_1^p,\dots,x_n^p]$.
Consider polynomials $f_1,\dots,f_m\in A$,
where $m\geqslant 1$.
For arbitrary $j_1,\dots,j_m\in\{1,\dots,n\}$
denote by $\jac^{f_1,\dots,f_m}_{j_1,\dots,j_m}$
the Jacobian determinant of $f_1,\dots,f_m$
with respect to $x_{j_1},\dots,x_{j_m}$.
Following \cite{jaccond},
we define a differential gcd of $f_1,\dots,f_m$:
$$\dgcd(f_1,\dots,f_m)=
\gcd\left(\jac^{f_1,\dots,f_m}_{j_1,\dots,j_m},\;
j_1,\dots,j_m\in\{1,\dots,n\}\right).$$
We put $\dgcd(f_1,\dots,f_m)=0$
if $\jac^{f_1,\dots,f_m}_{j_1,\dots,j_m}=0$
for every $j_1,$ $\dots,$ $j_m\in\{1,$ $\dots,n\}$,
that is, $f_1,\dots,f_m$ are $p$-dependent over $B$
(this equivalence follows from \cite{charpbases},
Lemma~3.3, Proposition~3.5 and Example~3.1).
Observe that in the case of $m=n-1$ we have
$$\dgcd(f_1,\dots,f_{n-1})=
\gcd\big(\jac^{f_1,\dots,f_{n-1}}_{1,\dots,\widehat{i},\dots,n},
\;i=1,\dots,n\big),$$
where $\widehat{i}$ means that the element $i$ is omitted.
Note that $\dgcd(f_1,\dots,f_m)$ is determined up to
associativity in $A$.
Two polynomials $g,h\in A$ are called associated,
and we denote it $g\sim h$,
if $g=ah$ for some invertible element $a\in K$.

\medskip

Let $d$ be a $K$-derivation of the polynomial algebra
$K[x_1,\dots,x_n]$ over a domain $K$
or an $L$-derivation of the field of rational functions
$L(x_1,\dots,x_n)$ over a field $L$.
The polynomial
$$d^{\ast}=\frac{\partial (d(x_1))}{\partial x_1}+\ldots+
\frac{\partial (d(x_n))}{\partial x_n}$$
is called the divergence of $d$
(see \cite{polder},~2.3).
For any element $f$ belonging to $K[x_1,\dots,x_n]$,
resp. $L(x_1,\dots,x_n)$,
we have $(fd)^{\ast}=fd^{\ast}+d(f)$.
Hence, if $d(f)=0$, then $(fd)^{\ast}=fd^{\ast}$.

\medskip

Given polynomials $f_1,\dots,f_{n-1}\in K[x_1,\dots,x_n]$,
where $K$ is a domain,
put $F=(f_1,\dots,f_{n-1})$ and consider
a $K$-derivation $d_F$ of $K[x_1,\dots,x_n]$ such that
$$d_F(g)=\jac(f_1,\dots,f_{n-1},g)$$
for $g\in K[x_1,\dots,x_n]$,
where $\jac$ denotes the usual Jacobian determinant
with respect to $x_1,\dots,x_n$.
A derivation of the form $d_F$ is called
a Jacobian derivation.
In a similar way we define a Jacobian derivation
of the field of rational functions $L(x_1,\dots,x_n)$
over a field $L$.
Finally, recall that a Jacobian derivation has zero divergence
(\cite{Freudenburg}, p.~58, Lemma~3.8,
the arguments are characteristic-free).

\section{\boldmath Jacobian derivations in $n$ variables}

The following lemma is a positive characteristic analog
of Lemma~6 from~\cite{MakarLimanov}
(see also \cite{Freudenburg}, p.~57, Lemma~3.6).

\begin{lm}
\label{l1}
Let $L$ be a field of characteristic $p>0$.
Assume that rational functions
$f_1,\dots,f_{n-1}\in L(x_1,\dots,x_n)$
are $p$-in\-de\-pen\-dent over $L(x_1^p,\dots,x_n^p)$.
Let $d$ be an $L$-derivation of $L(x_1,\dots,x_n)$
such that $d(f_1)=\dots=d(f_{n-1})=0$.
Then $d=cd_F$ for some $c\in L(x_1,\dots,x_n)$,
where $F=(f_1,\dots,f_{n-1})$.
\end{lm}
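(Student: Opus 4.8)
The plan is to exploit the fact that $p$-independence of $f_1,\dots,f_{n-1}$ over $L(x_1^p,\dots,x_n^p)$ forces the extension $L(x_1^p,\dots,x_n^p)(f_1,\dots,f_{n-1}) \subset L(x_1,\dots,x_n)$ to have degree exactly $p$ (since the full extension $L(x_1^p,\dots,x_n^p)\subset L(x_1,\dots,x_n)$ has degree $p^n$, and $f_1,\dots,f_{n-1}$ already account for $p^{n-1}$ of it). So one more $p$-independent element completes a $p$-basis. Concretely, I would first argue that the Jacobian derivation $d_F$ is itself nonzero: since $f_1,\dots,f_{n-1}$ are $p$-independent, by the remark in the excerpt (the equivalence drawn from \cite{charpbases}) we have $\dgcd(f_1,\dots,f_{n-1})\neq 0$, i.e. some Jacobian minor $\jac^{f_1,\dots,f_{n-1}}_{1,\dots,\widehat{i},\dots,n}$ is nonzero, and this minor is (up to sign) $d_F(x_i)$. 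Hence $d_F\neq 0$.

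Next I would locate a variable $x_i$ with $d_F(x_i)\neq 0$ and show that $f_1,\dots,f_{n-1},x_i$ form a $p$-basis of $L(x_1,\dots,x_n)$ over $L(x_1^p,\dots,x_n^p)$. For this it suffices, by the criterion recalled in the excerpt for fields, to check $p$-independence of these $n$ elements, equivalently that the field they generate over $L(x_1^p,\dots,x_n^p)$ has degree $p^n$, equivalently that it is all of $L(x_1,\dots,x_n)$; and since it contains $x_i$ and the $x_j^p$, what remains is to see the other $x_j$ lie in it. The cleanest way is via Jacobian criteria: $f_1,\dots,f_{n-1},x_i$ are $p$-independent over $L(x_1^p,\dots,x_n^p)$ precisely when $\jac(f_1,\dots,f_{n-1},x_i)\neq 0$, and expanding this determinant along the last row gives exactly $\pm d_F(x_i)\neq 0$. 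So $(f_1,\dots,f_{n-1},x_i)$ is a $p$-basis.

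Now I would use the uniqueness-of-derivations principle stated in the excerpt: once $z_1,\dots,z_n$ is a $p$-basis of $A$ over $B$, a $B$-derivation is determined by its values on $z_1,\dots,z_n$. Here $B = L(x_1^p,\dots,x_n^p)$, and both $d$ and a suitable scalar multiple $c\,d_F$ are $B$-derivations (every $L$-derivation kills the $p$-th powers). On the $p$-basis $(f_1,\dots,f_{n-1},x_i)$ both $d$ and $d_F$ kill $f_1,\dots,f_{n-1}$ (the first because of the hypothesis $d(f_j)=0$, the second by the alternating property of the Jacobian determinant — $\jac(f_1,\dots,f_{n-1},f_j)=0$). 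So I just need to match them on $x_i$: set $c = d(x_i)/d_F(x_i) \in L(x_1,\dots,x_n)$, which is legitimate since $d_F(x_i)\neq 0$. Then $d$ and $c\,d_F$ agree on the whole $p$-basis, hence $d = c\,d_F$.

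The main obstacle is the step asserting that $f_1,\dots,f_{n-1},x_i$ actually form a $p$-basis — one must be careful that $p$-independence of the full $n$-tuple is equivalent to nonvanishing of the single Jacobian $\jac(f_1,\dots,f_{n-1},x_i)$, and that this Jacobian equals $\pm d_F(x_i)$; this rests on the Jacobian characterization of $p$-independence over $L(x_1^p,\dots,x_n^p)$ (the field analog of the $\dgcd$ discussion), together with a cofactor expansion. Everything afterward is the formal uniqueness argument, which is routine given the tools quoted in the preliminaries.
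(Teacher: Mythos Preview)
Your proposal is correct and follows essentially the same route as the paper's proof: complete $f_1,\dots,f_{n-1}$ to a $p$-basis of $L(x_1,\dots,x_n)$ over $M=L(x_1^p,\dots,x_n^p)$ by one further element, then invoke the uniqueness of an $M$-derivation determined by its values on a $p$-basis. The only cosmetic difference is that the paper picks an arbitrary $g\notin M(f_1,\dots,f_{n-1})$ and deduces $d_F(g)\neq 0$ from $p$-independence, whereas you pick a specific variable $x_i$ with $d_F(x_i)\neq 0$ and deduce $p$-independence from the nonvanishing Jacobian; these are the two directions of the same Jacobian criterion, so the arguments are interchangeable.
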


\begin{proof}
Put $M=L(x_1^p,\dots,x_n^p)$.
Choose an element
$g\in L(x_1,\dots,x_n)\setminus M(f_1,\dots,f_{n-1})$
and put $c=\frac{d(g)}{d_F(g)}$,
where $d_F(g)=\jac(f_1,\dots,f_{n-1},g)\neq 0$,
because $f_1,\dots,f_{n-1},g$ are $p$-independent over $M$.
The elements $f_1,\dots,f_{n-1},g$ form a $p$-basis
of $L(x_1,\dots,x_n)$ over $M$,
and we have $d(g)=cd_F(g)$
and $d(f_i)=0=cd_F(f_i)$ for $i=1,\dots,n-1$,
so $d=cd_F$.
\end{proof}

We obtain the following generalization
of Proposition~4.1 from~\cite{phomogen}.

\begin{cor}
\label{c1}
Let $K$ be a UFD of characteristic $p>0$.
Assume that polynomials
$f_1,\dots,f_{n-1}\in K[x_1,\dots,x_n]$
are $p$-in\-de\-pen\-dent over $K[x_1^p,\dots,x_n^p]$,
put $\overline{F}=\dgcd(f_1,\dots,f_{n-1})$.
Let $d$ be a $K$-derivation of $K[x_1,\dots,x_n]$
such that $d(f_1)=\dots=d(f_{n-1})=0$.
Then $$\overline{F}d=hd_F$$
for some polynomial $h\in K[x_1,\dots,x_n]$
such that $h\sim\gcd(d(x_1),\dots,d(x_n))$.
\end{cor}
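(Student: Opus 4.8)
The plan is to pass to fields, invoke Lemma~\ref{l1}, and then descend back to the polynomial ring by unique factorization. Put $A=K[x_1,\dots,x_n]$ and $L=K_0$; then $A$ is a UFD with field of fractions $L(x_1,\dots,x_n)$. Let $\delta$ be the $L$-derivation of $L(x_1,\dots,x_n)$ extending $d$. First I would observe that $f_1,\dots,f_{n-1}$, being $p$-independent over $K[x_1^p,\dots,x_n^p]$, are $p$-independent over the field $L(x_1^p,\dots,x_n^p)$ as well, since either condition amounts to saying that the relevant field extension has degree $p^{n-1}$; moreover $\delta(f_i)=d(f_i)=0$ for $i=1,\dots,n-1$. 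Hence Lemma~\ref{l1} gives $\delta=c\,d_F$ for some $c\in L(x_1,\dots,x_n)$, and in particular $d(x_j)=c\,d_F(x_j)$ for $j=1,\dots,n$, an identity between elements of $A$. Since the $f_i$ are $p$-independent we also have $\overline{F}\neq 0$, and Laplace expansion of $d_F(x_j)=\jac(f_1,\dots,f_{n-1},x_j)$ shows $d_F(x_j)\sim\jac^{f_1,\dots,f_{n-1}}_{1,\dots,\widehat{j},\dots,n}$, so that $\gcd(d_F(x_1),\dots,d_F(x_n))\sim\dgcd(f_1,\dots,f_{n-1})=\overline{F}$.

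Next I would descend inside $A$. Write $c=a/b$ with $a,b\in A$ coprime. Clearing denominators gives $b\,d(x_j)=a\,d_F(x_j)$ for all $j$; since $\gcd(a,b)=1$, this forces $b\mid d_F(x_j)$ for every $j$, hence $b\mid\gcd(d_F(x_1),\dots,d_F(x_n))\sim\overline{F}$. Thus I may write $\overline{F}=bb'$ and $d_F(x_j)=bE_j$ with $b',E_1,\dots,E_n\in A$, and cancelling $b$ in $b\,\gcd(E_1,\dots,E_n)=\gcd(d_F(x_1),\dots,d_F(x_n))\sim bb'$ yields $\gcd(E_1,\dots,E_n)\sim b'$. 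Multiplying $d(x_j)=\frac{a}{b}\,d_F(x_j)$ through by $\overline{F}=bb'$ now gives $\overline{F}\,d(x_j)=ab'\,d_F(x_j)$ for all $j$, i.e. $\overline{F}\,d=h\,d_F$ with $h=ab'\in A$ (two $K$-derivations of $A$ agreeing on $x_1,\dots,x_n$ being equal).

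It remains to identify $h$ up to associativity. From $d(x_j)=\frac{a}{b}\,d_F(x_j)=aE_j$ we get $\gcd(d(x_1),\dots,d(x_n))=a\,\gcd(E_1,\dots,E_n)\sim ab'=h$, which is exactly $h\sim\gcd(d(x_1),\dots,d(x_n))$. I expect the only delicate parts of this argument to be the routine unique-factorization bookkeeping --- justifying $b\mid d_F(x_j)$ from coprimality, and the identity $\gcd(aE_1,\dots,aE_n)=a\,\gcd(E_1,\dots,E_n)$, with the degenerate case $d=0$ (so $a=h=0$, $b=1$) causing no trouble --- since the substantive step, the very existence of the factor $c$, has already been carried out in Lemma~\ref{l1}.
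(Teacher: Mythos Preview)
Your argument is correct and follows essentially the same route as the paper's: extend to the field of fractions, apply Lemma~\ref{l1} to get $\delta=c\,\delta_F$, write $c=a/b$ with $a,b$ coprime, deduce $b\mid\overline{F}$, and set $h=ab'$ (the paper writes $h'$ for your $b'$). The only cosmetic difference is that the paper identifies $h$ by taking $\gcd$'s directly in $\overline{F}\,d(x_i)=h\,d_F(x_i)$ and cancelling the nonzero $\overline{F}$, whereas you unpack this via the auxiliary $E_j$; the computations are equivalent.
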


\begin{proof}
Put $A=K[x_1,\dots,x_n]$ and $B=K[x_1^p,\dots,x_n^p]$.
Consider extensions of $d$ and $d_F$
to $B_0$-derivations of $A_0$:
$\delta$ and $\delta_F$, respectively.
By Lemma~\ref{l1}, $\delta=c\delta_F$
for some $c\in A_0$.
Put $c=\frac{a}{b}$, where $a,b\in A$,
$b\neq 0$, $\gcd(a,b)\sim 1$.
Then $bd=ad_F$.
Now, observe that $b\mid d_F(x_i)$ for $i=1,\dots,n$,
so $b\mid \overline{F}$, that is,
$\overline{F}=bh'$ for some $h'\in A$.
We obtain $\overline{F}d=bh'd=ah'd_F=hd_F$,
where $h=ah'$, and we have then
$$\overline{F}\cdot\gcd(d(x_1),\dots,d(x_n))\sim
h\cdot\gcd(d_F(x_1),\dots,d_F(x_n)).$$
Recall that $\overline{F}\neq 0$, because
$f_1,\dots,f_{n-1}$ are $p$-independent over $B$.
Finally, $h\sim\gcd(d(x_1),\dots,d(x_n))$.
\end{proof}

The following property of Jacobian derivations
has been observed by Makar-Limanov
in \cite{MakarLimanov}, Lemma~7
(see \cite{Freudenburg}, p.~57, Lemma~3.7)
in the case of characteristic~$0$.

\begin{cor}
\label{c2}
Let $L$ be a field of characteristic $p>0$.
Assume that rational functions
$f_1,\dots,f_{n-1}\in L(x_1,\dots,x_n)$
are $p$-in\-de\-pen\-dent over $L(x_1^p,\dots,x_n^p)$.
Consider rational functions
$g_1,\dots,g_{n-1}\in L(x_1,\dots,x_n)^{d_F}$,
where $F=(f_1,$ $\dots,$ $f_{n-1})$.
Then $d_G=cd_F$ for some $c\in L(x_1,\dots,x_n)^{d_F}$,
where $G=(g_1,\dots,g_{n-1})$.
\end{cor}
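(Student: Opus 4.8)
The plan is to deduce Corollary~\ref{c2} from Lemma~\ref{l1} by checking its hypotheses for the derivation $d_G$. First I would apply $d_G$ to each $f_i$: since $g_1,\dots,g_{n-1}\in L(x_1,\dots,x_n)^{d_F}$, the chain rule for Jacobians (or rather the expansion of the Jacobian determinant along the last row) gives that $d_G(f_i)=\jac(g_1,\dots,g_{n-1},f_i)$ equals, up to sign, $\jac(f_1,\dots,f_{n-1},f_i)$ times the appropriate functional determinant $\frac{\partial(g_1,\dots,g_{n-1})}{\partial(f_1,\dots,f_{n-1})}$ — but the cleanest route is simply to observe that $d_F(f_i)=0$ and $d_F(g_j)=0$, so $d_F$ kills everything in the field $L(f_1,\dots,f_{n-1},g_1,\dots,g_{n-1})$, and then argue via the transformation formula. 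Concretely: write each $g_j$ in terms of a $p$-basis $f_1,\dots,f_{n-1},t$ of $L(x_1,\dots,x_n)$ over $M=L(x_1^p,\dots,x_n^p)$; since $d_F(g_j)=0$ and $d_F(f_i)=0$ while $d_F(t)\neq0$, we get $\frac{\partial g_j}{\partial t}=0$ in that basis, i.e.\ $g_j\in M(f_1,\dots,f_{n-1})$.

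Once $g_1,\dots,g_{n-1}\in M(f_1,\dots,f_{n-1})$, the key point is that $d_G$ annihilates each $g_j$ by the alternating property of the Jacobian (a determinant with a repeated row vanishes), and that $d_G$ is an $L$-derivation of $L(x_1,\dots,x_n)$; moreover $d_G$ kills $M$ automatically as every $L$-derivation does. Hence $d_G$ kills $M(g_1,\dots,g_{n-1})\supseteq M(f_1,\dots,f_{n-1})\ni f_1,\dots,f_{n-1}$, so $d_G(f_1)=\dots=d_G(f_{n-1})=0$. Now Lemma~\ref{l1} applies directly to $d=d_G$ (the $p$-independence of $f_1,\dots,f_{n-1}$ over $M$ is exactly the standing hypothesis), yielding $d_G=cd_F$ for some $c\in L(x_1,\dots,x_n)$.

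It remains to check that $c$ itself lies in $L(x_1,\dots,x_n)^{d_F}$. I would apply $d_F$ to the identity $d_G=cd_F$ evaluated on a convenient element. Taking any $w$ with $d_F(w)\neq0$, we have $d_G(w)=c\,d_F(w)$; applying $d_F$ to this equation and using that $d_F$ and $d_G$ commute would be one way, but more robustly: since both $d_F$ and $d_G$ are Jacobian derivations they share the common constants $f_1,\dots,f_{n-1}$, and $c=d_G(w)/d_F(w)$ for a suitable $w$ — choosing $w$ to be a $p$-basis element $t$ complementary to $f_1,\dots,f_{n-1}$, one shows $d_F(c)=0$ by a direct computation expressing $d_F(d_G(t))$ and $d_F(d_G(t)) = d_G(d_F(t))$ and comparing. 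The cleanest argument is: $c = d_G(t)/d_F(t)$, and since $d_G(t),\,d_F(t)\in M(f_1,\dots,f_{n-1},t)$ with known $t$-degrees, a short computation shows the quotient lies in $L(x_1,\dots,x_n)^{d_F}=M(f_1,\dots,f_{n-1})$.

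The main obstacle I anticipate is the last step — verifying $c\in L(x_1,\dots,x_n)^{d_F}$ — since the first two steps are essentially immediate from the alternating property of determinants and Lemma~\ref{l1}. One must be a little careful about characteristic $p$: the usual characteristic-zero argument for $c$ being a constant uses the chain rule freely, and here one should instead lean on the $p$-basis description of $L(x_1,\dots,x_n)^{d_F}$ as $M(f_1,\dots,f_{n-1})$ (which is a field of the right transcendence degree, being the kernel of $d_F$ when $f_1,\dots,f_{n-1}$ are $p$-independent) together with the commutativity $d_F d_G = d_G d_F$ of the two Jacobian derivations.
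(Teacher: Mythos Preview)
Your route diverges from the paper's and has two genuine gaps. First, to apply Lemma~\ref{l1} to $d_G$ you need $d_G(f_i)=0$, and you argue this via ``$d_G$ kills $M(g_1,\dots,g_{n-1})\supseteq M(f_1,\dots,f_{n-1})$''. But you have only shown $g_j\in M(f_1,\dots,f_{n-1})$, i.e.\ the \emph{opposite} inclusion; equality requires the $g_j$ to be $p$-independent over $M$, a case split you never make (and when they are $p$-dependent one simply has $d_G=0$, so $c=0$). The paper sidesteps this entirely by applying Lemma~\ref{l1} in the reverse direction: since $d_F(g_j)=0$ by hypothesis, in the $p$-independent case Lemma~\ref{l1} yields $d_F=a\,d_G$ with $a\neq 0$, and one sets $c=a^{-1}$.

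Second, your arguments for $d_F(c)=0$ do not close. Once $d_G=c\,d_F$ is known, one computes $[d_F,d_G]=d_F(c)\,d_F$, so the commutativity $d_Fd_G=d_Gd_F$ is \emph{equivalent} to $d_F(c)=0$ and cannot be invoked to prove it; the ``known $t$-degrees'' sketch is too vague to carry weight. The paper's argument here is a one-liner using divergence: Jacobian derivations satisfy $(d_F)^{\ast}=(d_G)^{\ast}=0$, and the identity $(cd_F)^{\ast}=c\,(d_F)^{\ast}+d_F(c)$ then gives $d_F(c)=0$ immediately. Incidentally, the chain-rule idea you raise and then abandon would have delivered the whole corollary in one stroke: from $g_j\in M(f_1,\dots,f_{n-1})$ one has $\nabla g_j=\sum_l(\partial g_j/\partial f_l)\,\nabla f_l$, whence $d_G=\det\bigl(\partial g_j/\partial f_l\bigr)\cdot d_F$ with the factor visibly lying in $M(f_1,\dots,f_{n-1})=L(x_1,\dots,x_n)^{d_F}$.
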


\begin{proof}
If $g_1,\dots,g_{n-1}$ are $p$-de\-pen\-dent
over $L(x_1^p,\dots,x_n^p)$, we put $c=0$.
Now, assume that $g_1,\dots,g_{n-1}$
are $p$-in\-de\-pen\-dent over $L(x_1^p,\dots,x_n^p)$.
By Lemma~\ref{l1}, $\delta_F=a\delta_G$
for some $a\in L(x_1,\dots,x_n)$.
Note that $a\neq 0$, because $f_1,\dots,f_{n-1}$
are $p$-in\-de\-pen\-dent over $L(x_1^p,\dots,x_n^p)$.
Hence, $d_G=cd_F$ for $c=a^{-1}$.
Now, comparing the divergence, we obtain
$$0=(d_G)^{\ast}=(cd_F)^{\ast}=c(d_F)^{\ast}+d_F(c)=d_F(c),$$
so $c\in L(x_1,\dots,x_n)^{d_F}$.
\end{proof}

In the following theorem we obtain a characterization
of derivations with $n-1$-element $p$-basis
of the ring of constants.

\begin{tw}
\label{t1}
Let $K$ be a UFD of characteristic $p>0$.
Let $d$ be a nonzero $K$-derivation of
the polynomial $K$-algebra $K[x_1,\dots,x_n]$
such that $d(x_1)$, $\dots$, $d(x_n)$
are (in common) coprime.
The following conditions are equivalent:

\smallskip

\noindent
$(1)$ \
$K[x_1,\dots,x_n]^d$ has a $p$-basis
over $K[x_1^p,\dots,x_n^p]$
consisting of $n-1$ elements,

\medskip

\noindent
$(2)$ \
$d$ is a Jacobian derivation,

\medskip

\noindent
$(3)$ \
there exist polynomials
$f_1,\dots,f_{n-1}\in K[x_1,\dots,x_n]^d$
such that $$\dgcd(f_1,\dots,f_{n-1})\sim 1.$$
\end{tw}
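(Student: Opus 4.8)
The natural strategy is to prove the cycle of implications $(2)\Rightarrow(3)\Rightarrow(1)\Rightarrow(2)$, exploiting the three corollaries just established together with the $p$-basis characterizations quoted from \cite{charoneel} and \cite{charpbases}.

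For $(2)\Rightarrow(3)$: assume $d=d_F$ for some $F=(f_1,\dots,f_{n-1})$. Since $d$ is nonzero, $d_F\ne 0$, so $f_1,\dots,f_{n-1}$ are $p$-independent over $B=K[x_1^p,\dots,x_n^p]$ (by the equivalence cited after the definition of $\dgcd$). Clearly $d_F(f_i)=\jac(f_1,\dots,f_{n-1},f_i)=0$, so $f_i\in A^d$. It remains to see $\dgcd(f_1,\dots,f_{n-1})\sim 1$: the entries of the Jacobian vector $\big(\jac^{f_1,\dots,f_{n-1}}_{1,\dots,\widehat i,\dots,n}\big)_i$ are, up to sign, exactly $d_F(x_i)=d(x_i)$, which are coprime by hypothesis. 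Hence $\dgcd(f_1,\dots,f_{n-1})=\gcd(d(x_1),\dots,d(x_n))\sim 1$.

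For $(3)\Rightarrow(1)$: given $f_1,\dots,f_{n-1}\in A^d$ with $\dgcd(f_1,\dots,f_{n-1})\sim 1$, in particular they are $p$-independent over $B$, so $\overline F=\dgcd(f_1,\dots,f_{n-1})\sim 1$ and Corollary~\ref{c1} gives $\overline F\,d=h\,d_F$ with $h\sim\gcd(d(x_1),\dots,d(x_n))\sim 1$; thus $d\sim d_F$, i.e.\ $d$ and $d_F$ agree up to an invertible scalar of $K$, and in particular $A^d=A^{d_F}$. Now I invoke the $p$-basis characterization from \cite{charpbases}, Theorem~4.4 (the characterization of many-element $p$-bases): the condition $\dgcd(f_1,\dots,f_{n-1})\sim 1$ is precisely what is needed to conclude that $f_1,\dots,f_{n-1}$ form a $p$-basis of $A^{d_F}=A^d$ over $B$. (One should check that $K[x_1,\dots,x_n]^{d_F}$ is exactly the subring generated over $B$ by $f_1,\dots,f_{n-1}$ when the $\dgcd$ is trivial; the ``only if'' inclusion $B[f_1,\dots,f_{n-1}]\subset A^{d_F}$ is immediate, and the reverse inclusion together with $p$-independence is the content of the cited theorem.) This yields an $(n-1)$-element $p$-basis of $A^d$ over $B$.

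For $(1)\Rightarrow(2)$: suppose $f_1,\dots,f_{n-1}$ form a $p$-basis of $A^d=R$ over $B$. Then they are $p$-independent over $B$, so by Corollary~\ref{c1}, $\overline F\,d=h\,d_F$ with $\overline F=\dgcd(f_1,\dots,f_{n-1})$ and $h\sim\gcd(d(x_1),\dots,d(x_n))\sim 1$ (using the coprimality hypothesis on $d$). Passing to the field of fractions and using Lemma~\ref{l1}, $d$ and $d_F$ are proportional over $A_0$, so $d_F$ also kills all of $R$ and is nonzero, hence $d_F(x_i)$ are not all zero. The remaining task is to show $\overline F\sim 1$, which will give $d=\tfrac{h}{\overline F}d_F$ with $h/\overline F$ a unit of $K$, i.e.\ $d$ is (a unit multiple of, hence equal to after rescaling $F$, or simply already) a Jacobian derivation. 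Here is where I expect the \emph{main obstacle}: one must rule out a nontrivial common factor of all the Jacobian minors $\jac^{f_1,\dots,f_{n-1}}_{1,\dots,\widehat i,\dots,n}$. The point should be that $f_1,\dots,f_{n-1}$ being not merely $p$-independent but an actual $p$-basis of $R$ over $B$ forces the $\dgcd$ to be trivial --- intuitively, a common factor would mean one could ``divide'' $d_F$ and obtain a strictly larger ring of constants, contradicting $A^d=R=B[f_1,\dots,f_{n-1}]$ being already the full kernel. Making this rigorous is exactly the ``only if'' direction of \cite{charpbases}, Theorem~4.4, which characterizes $p$-bases in terms of the differential gcd; citing it closes the loop. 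A clean alternative is to argue: since $\overline F\,d=h\,d_F$ and $d,d_F$ have coprime coefficient-tuples, comparing gcds forces $\overline F\sim h\sim 1$ directly, sidestepping the subtlety --- this is the cheapest route and is essentially the computation already done inside Corollary~\ref{c1}.
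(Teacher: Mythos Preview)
Your overall architecture matches the paper's: the paper also cycles through the implications using Corollary~\ref{c1} together with \cite{charpbases}, Theorem~4.4, and your $(2)\Rightarrow(3)$ is identical to the paper's. Two points, one minor and one genuine, deserve attention.

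\medskip

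\textbf{A minor difference in $(3)\Rightarrow(1)$.} You first show $d$ and $d_F$ agree up to a unit (so $A^d=A^{d_F}$) and then want \cite{charpbases}, Theorem~4.4 to deliver $A^{d_F}=B[f_1,\dots,f_{n-1}]$. The paper instead quotes that theorem only to say $f_1,\dots,f_{n-1}$ is a $p$-basis of \emph{some} ring of constants $R$, and then closes the gap $R=A^d$ by a short degree count: $[R_0:B_0]=p^{n-1}$, $R\subset A^d\subsetneq A$, $[A_0:B_0]=p^n$, hence $R_0=(A^d)_0$ and $R=R_0\cap A=A^d$. If the cited theorem does not literally identify $R$ with $A^{d_F}$, you will need this degree argument too; in any case it is a one-line addition.

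\medskip

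\textbf{A genuine error in the ``cheapest route'' for $(1)\Rightarrow(2)$.} Your final shortcut is circular. You assert that both $d$ and $d_F$ have coprime coefficient-tuples and then compare gcds in $\overline{F}\,d=h\,d_F$. But $\gcd\big(d_F(x_1),\dots,d_F(x_n)\big)$ is \emph{exactly} $\dgcd(f_1,\dots,f_{n-1})=\overline{F}$, which is the quantity you are trying to prove is a unit. Taking gcds of $\overline{F}\,d(x_i)=h\,d_F(x_i)$ yields only $\overline{F}\cdot 1\sim h\cdot\overline{F}$, i.e.\ $h\sim 1$ (already known from Corollary~\ref{c1}), and says nothing about $\overline{F}$. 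So this route does not sidestep the subtlety; you must use the ``only if'' direction of \cite{charpbases}, Theorem~4.4 (your first option), which is precisely what the paper does for $(1)\Rightarrow(3)$, and then $(3)\Rightarrow(2)$ via Corollary~\ref{c1} finishes as you indicated.
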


\begin{proof}
$(1)\Rightarrow (3)$ \
was established in \cite{charpbases}, Theorem~4.4.

\medskip

\noindent
$(3)\Rightarrow (1)$ \
Put $A=K[x_1,\dots,x_n]$, $B=K[x_1^p,\dots,x_n^p]$.
Assume that $\dgcd(f_1,\dots,f_{n-1})\sim 1$
for some $f_1,\dots,f_{n-1}\in A^d$.
Then, by \cite{charpbases}, Theorem~4.4,
$f_1,\dots,f_{n-1}$ form a $p$-basis
of a ring of constants $R$ of some $K$-derivation of $A$.
The degree of the field extension $B_0\subset R_0$
equals $p^{n-1}$, because $f_1,\dots,f_{n-1}$
are $p$-independent over $B$.
Then, since $R\subset A^d$, the degree of
the field extension $(A^d)_0\subset A_0$
does not exceed $p$, so it equals $p$,
because $d$ is nonzero.
We obtain that $R_0=(A^d)_0$,
so $R=R_0\cap A=(A^d)_0\cap A=A^d$.

\medskip

\noindent
$(2)\Rightarrow (3)$ \
If $d=d_F$, where $F=(f_1,\dots,f_{n-1})$,
then $$d(x_i)=(-1)^{n+i}
\jac^{f_1,\dots,f_{n-1}}_{1,\dots,\widehat{i},\dots,n}$$
for $i=1,\dots,n$.
Hence, $\dgcd(f_1,\dots,f_{n-1})\sim 1$,
because $d(x_1)$, $\dots$, $d(x_n)$ are (in common) coprime.

\medskip

\noindent
$(3)\Rightarrow (2)$ \
Assume that $\dgcd(f_1,\dots,f_{n-1})\sim 1$
for some $f_1,\dots,f_{n-1}\in K[x_1,\dots,x_n]^d$.
By Corollary~\ref{c1}, $\overline{F}d=hd_F$
for some $h\in K[x_1,\dots,x_n]$
such that $h\sim\gcd(d(x_1),\dots,d(x_n))$,
where $\overline{F}=\dgcd(f_1,\dots,f_{n-1})$.
We have $\overline{F}\sim 1$ and $h\sim 1$,
so $d=d_{F'}$ for $F'=(h\overline{F}^{-1}f_1,
\dots,h\overline{F}^{-1}f_{n-1})$.
\end{proof}

\section{Jacobian derivations in two variables}

In this section we consider the polynomial algebra $K[x,y]$,
where $K$ is a UFD of characteristic $p>0$.

\medskip

Recall Lemma~5.1 and Proposition~5.4 from~\cite{phomogen}.
Note that $K$ was a field in the original formulations,
but the proofs are valid for $K$ being a UFD.

\begin{lm}
\label{l2}
Let $d$ be a $K$-derivation of $K[x,y]$ and let
$$d(x)=\sum_{0\leqslant i,j<p}a_{ij}x^iy^j,\hspace{3mm}
d(y)=\sum_{0\leqslant i,j<p}b_{ij}x^iy^j,$$
where $a_{ij},b_{ij}\in K[x^p,y^p]$.

\medskip

\noindent
{\bf a)} \
Then $d$ is a Jacobian derivation if and only if
$d^{\ast}=0$, $a_{0,p-1}=0$ and $b_{p-1,0}=0$.

\medskip

\noindent
{\bf b)} \
Let $d(x)$, $d(y)$ be coprime and $d^{\ast}=0$.
Then $K[x,y]^d=K[x^p,y^p]$ if and only if
$a_{0,p-1}\neq 0$ or $b_{p-1,0}\neq 0$.
\end{lm}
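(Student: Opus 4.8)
These are Lemma~5.1 and Proposition~5.4 of \cite{phomogen}, and one option is simply to observe that the proofs there, written for $K$ a field, use $K$ only as a domain with greatest common divisors; all gcd computations below take place in the UFD's $A:=K[x,y]$ and $B:=K[x^p,y^p]$, so nothing more than unique factorization in $K$ is needed. I will instead sketch a proof along the following lines: part~a) is a direct computation built on the characteristic-$p$ ``Poincar\'e lemma'', and part~b) is then deduced from a), from Theorem~\ref{t1}, and from the divergence-freeness of Jacobian derivations.

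For part a) I would first record the basic fact that the image of $\partial/\partial x$ on $A$ is exactly the set of polynomials whose $p$-basis expansion over $B$ has zero $x^{p-1}$-component, and symmetrically for $\partial/\partial y$ and the $y^{p-1}$-component. The implication $(\Rightarrow)$ is then immediate: if $d=d_{(f)}$ then $d(x)=-f_y$ and $d(y)=f_x$, so $d(x)$ has no $y^{p-1}$-component (i.e.\ $a_{0,p-1}=0$), $d(y)$ has no $x^{p-1}$-component (i.e.\ $b_{p-1,0}=0$), and $d^{\ast}=0$ by the fact recalled above that a Jacobian derivation has zero divergence. For $(\Leftarrow)$, assume $d^{\ast}=0$, $a_{0,p-1}=0$, $b_{p-1,0}=0$; writing out $d^{\ast}=0$ in the $p$-basis and comparing the coefficients of $x^iy^{p-1}$ for $i\geqslant 1$ and of $x^{p-1}y^j$ for $j\geqslant 1$ forces $a_{i,p-1}=0$ and $b_{p-1,j}=0$ for \emph{all} $i,j$, i.e.\ $d(y)$ has no $x^{p-1}$-component and $d(x)$ has no $y^{p-1}$-component. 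Now integrate: choose $h\in A$ with $h_x=d(y)$; the relation $d^{\ast}=0$ gives $\partial(d(x)+h_y)/\partial x=0$, so $d(x)+h_y\in K[x^p,y]$, and it has no $y^{p-1}$-component (neither of its two summands has one), so there is $u\in K[x^p,y]$ with $u_y=-(d(x)+h_y)$; then $f:=h+u$ satisfies $f_x=d(y)$ and $f_y=-d(x)$, i.e.\ $d=d_{(f)}$. I expect the integration step — verifying that $d^{\ast}=0$ together with the two boundary conditions is precisely what kills the obstruction to producing $f$ — to be the technical heart of a).

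For part b), by a) and $d^{\ast}=0$ the condition ``$a_{0,p-1}\ne 0$ or $b_{p-1,0}\ne 0$'' is equivalent to ``$d$ is not a Jacobian derivation'', so b) becomes: $A^d=B\iff d$ is not Jacobian. If $d$ is Jacobian, Theorem~\ref{t1} yields $f\in A^d$ with $\dgcd(f)=\gcd(f_x,f_y)\sim 1$, which is impossible when $A^d=B$ (then $f_x=f_y=0$, so $\dgcd(f)=0$); this gives $A^d=B\Rightarrow d$ not Jacobian. For the converse I would prove the contrapositive $A^d\supsetneq B\Rightarrow d$ Jacobian. Given $f\in A^d\setminus B$: since $\gcd(d(x),d(y))\sim 1$, the relation $d(f)=0$ forces $f_x=\overline f\,d(y)$ and $f_y=-\overline f\,d(x)$ with $\overline f=\gcd(f_x,f_y)$, which is exactly the statement $\overline f\,d=d_{(f)}$; as Jacobian derivations have zero divergence and $d^{\ast}=0$, the identity $(\overline f\,d)^{\ast}=\overline f\,d^{\ast}+d(\overline f)$ gives $d(\overline f)=0$, so $\overline f\in A^d$. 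Replacing $f$ by $\overline f$ finitely many times (the degree strictly drops at each step), I may assume $\overline f\in B$; then freeness of $A$ over $B$ shows $\overline f$ divides every $p$-basis coefficient $f_{ij}$ of $f$ with $(i,j)\ne (0,0)$, so writing $f=f_{00}+\overline f\,g$ one checks, using $f_{00},\overline f\in A^d$, that $g\in A^d\setminus B$ and $f_x=\overline f\,g_x$, $f_y=\overline f\,g_y$, hence $\dgcd(g)=\gcd(g_x,g_y)\sim 1$; by Theorem~\ref{t1}, $d$ is Jacobian. The main obstacle in b) is this manufacturing of a constant with trivial $\dgcd$, the decisive point being that $\gcd(f_x,f_y)$ is itself a $d$-constant — which is exactly where the hypothesis $d^{\ast}=0$ enters essentially; without it the statement is false (cf.\ \cite{NN}, Example~4.3).
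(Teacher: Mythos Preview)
Your first sentence already matches the paper exactly: the paper gives no proof of this lemma at all, only the remark that it is Lemma~5.1 and Proposition~5.4 of \cite{phomogen} and that the arguments there go through when $K$ is merely a UFD. So your ``one option'' is precisely what the paper does.

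Your alternative route is correct and goes further than the paper. Part~a) is a straightforward Poincar\'e--lemma computation and is presumably close to the original argument in \cite{phomogen}. Part~b), however, is genuinely different: you reduce b) to the equivalence ``$A^d=B\iff d$ is not Jacobian'' via a), and then prove that equivalence using Theorem~\ref{t1} of \emph{this} paper together with a descent on $\deg f$ (passing from $f$ to $\overline f=\gcd(f_x,f_y)$, whose $d$-constancy you extract from $d^{\ast}=0$). This is logically sound---Theorem~\ref{t1} depends only on Lemma~\ref{l1}, Corollary~\ref{c1}, and \cite{charpbases}, none of which use Lemma~\ref{l2}---so there is no circularity. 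In fact your argument for the contrapositive direction of b) is an independent proof of Corollary~\ref{c3}, obtained here as a consequence of Theorem~\ref{t1} rather than of Lemma~\ref{l2}. What the paper's approach buys is brevity (a bare citation); what yours buys is a self-contained derivation inside the paper's own framework, showing that b) can be read as a corollary of the $n$-variable Theorem~\ref{t1} rather than as an imported input. A minor point: in your descent you should note that $\deg\overline f\leqslant\max(\deg f_x,\deg f_y)\leqslant\deg f-1$, which is fine since $f\notin B$ forces at least one partial to be nonzero; and when you say ``$\overline f\,d=d_{(f)}$'' you mean this up to a unit of $K$ (as in Corollary~\ref{c1}), which is all you need for the divergence computation.
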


\begin{cor}
\label{c3}
If $d$ is a $K$-derivation of $K[x,y]$ such that
$d(x)$, $d(y)$ are coprime, $d^{\ast}=0$
and $K[x,y]^d\neq K[x^p,y^p]$,
then $d$ is a Jacobian derivation.
\end{cor}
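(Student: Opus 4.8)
The plan is to derive this immediately from Lemma~\ref{l2}, whose two parts are almost exactly tailored to this purpose. First I would write $d(x)$ and $d(y)$ in the canonical form $d(x)=\sum_{0\leqslant i,j<p}a_{ij}x^iy^j$, $d(y)=\sum_{0\leqslant i,j<p}b_{ij}x^iy^j$ with $a_{ij},b_{ij}\in K[x^p,y^p]$; such a representation exists and is unique because $x,y$ form a $p$-basis of $K[x,y]$ over $K[x^p,y^p]$, as recalled in Section~2.

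Next, since we are assuming $d(x)$ and $d(y)$ are coprime and $d^{\ast}=0$, the hypotheses of Lemma~\ref{l2}b are met, so $K[x,y]^d=K[x^p,y^p]$ holds if and only if $a_{0,p-1}\neq 0$ or $b_{p-1,0}\neq 0$. Taking the contrapositive, the assumption $K[x,y]^d\neq K[x^p,y^p]$ forces $a_{0,p-1}=0$ and $b_{p-1,0}=0$.

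Finally I would invoke Lemma~\ref{l2}a: we now have $d^{\ast}=0$, $a_{0,p-1}=0$ and $b_{p-1,0}=0$, which are precisely the three conditions characterizing Jacobian derivations in two variables. Hence $d$ is a Jacobian derivation, completing the proof.

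There is no real obstacle here; the statement is a direct logical combination of parts a) and b) of Lemma~\ref{l2}, and the only point worth flagging is that the coprimality of $d(x)$, $d(y)$ is genuinely needed — it is the hypothesis that licenses the application of part b) and hence the vanishing of $a_{0,p-1}$ and $b_{p-1,0}$.
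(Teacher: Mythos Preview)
Your proposal is correct and is exactly the argument the paper intends: the corollary is stated immediately after Lemma~\ref{l2} with no separate proof, because it is precisely the logical combination of parts a) and b) that you describe. There is nothing to add.
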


For every polynomial $f\in K[x,y]$
consider a presentation in the form
$$f=\sum_{0\leqslant i,j<p}a_{ij}x^iy^j,$$
where $a_{ij}\in K[x^p,y^p]$ for $i,j\in\{0,\dots,p-1\}$.
Denote: $$f_{(p)}=a_{00}.$$
Note that $d(f_{(p)})=0$
for every $K$-derivation $d$ of $K[x,y]$.
Moreover, $f\in K[x^p,y^p]$ if and only if $f_{(p)}=f$.
Observe also that $(f+g)_{(p)}=f_{(p)}+g_{(p)}$
for two polynomials $f,g\in K[x,y]$
and that $(h\cdot f)_{(p)}=h\cdot f_{(p)}$
for $h\in K[x^p,y^p]$ and $f\in K[x,y]$.

\medskip

The following lemma will be useful for constructing
a one-element $p$-basis in Proposition~\ref{p1}.
Some motivations of this approach come from
\cite{Ono}, Theorem~4.1.

\begin{lm}
\label{l3}
Let $d$ be a nonzero $K$-derivation
of the polynomial $K$-algebra $K[x,y]$,
where $K$ is a UFD, $\cha K=p>0$.
Assume that $d^{\ast}=0$, $K[x,y]^d\neq K[x^p,y^p]$,
and that $d(x)$, $d(y)$ are coprime.
Consider a polynomial $g\in K[x,y]$ such that
$g-g_{(p)}$ is a minimal nonzero polynomial
(with respect to an ordinary degree) belonging to $K[x,y]^d$.
Put $\overline{g}=
\gcd\big(\frac{\partial g}{\partial x},
\frac{\partial g}{\partial y}\big)$.
Then:

\medskip

\noindent
{\bf a)} \
$\overline{g}d=hd_g$,
where $h$ is an invertible element of $K$,

\medskip

\noindent
{\bf b)} \
$\overline{g}\in K[x^p,y^p]$.
\end{lm}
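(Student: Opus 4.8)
The plan is to apply Corollary~\ref{c1} in the case $n=2$ with the single polynomial $f_1 = g$. First I would check that the hypotheses of Corollary~\ref{c1} are met: since $d$ is nonzero, its field of constants $(K[x,y]^d)_0$ has index $p$ in $K(x,y)$, so any nonconstant element of $K[x,y]^d$ is $p$-independent over $K[x^p,y^p]$; in particular $g$ (equivalently $g-g_{(p)}$, which is nonzero and lies in $K[x,y]^d$ by the choice of $g$) is $p$-independent over $K[x^p,y^p]$. Note also that $\dgcd(g)=\gcd\bigl(\frac{\partial g}{\partial x},\frac{\partial g}{\partial y}\bigr)=\overline{g}$ by the $n=2$ specialization of the differential gcd, and $d_g$ is exactly the Jacobian derivation $d_F$ for $F=(g)$: explicitly $d_g(x) = \jac(g,x) = -\frac{\partial g}{\partial y}$ and $d_g(y) = \jac(g,y) = \frac{\partial g}{\partial x}$. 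Corollary~\ref{c1} then yields $\overline{g}\,d = h\,d_g$ for some $h\in K[x,y]$ with $h\sim\gcd(d(x),d(y))$. Since $d(x),d(y)$ are coprime, $\gcd(d(x),d(y))\sim 1$, so $h$ is an invertible element of $K$. This is part~\textbf{a)}.

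For part~\textbf{b)}, the idea is to extract the polynomial $\overline{g}$ as an actual ring-of-constants element and then use its minimality-driven role. From part~\textbf{a)} we have $\overline{g}\,d = h\,d_g$ with $h\in K$ invertible, hence $d_g = h^{-1}\overline{g}\,d$. Since $d$ is a $K[x^p,y^p]$-derivation, $d(x^p) = d(y^p) = 0$, so $d_g(x^p) = d_g(y^p) = 0$, i.e.\ $x^p, y^p \in K[x,y]^{d_g}$. The key observation is to compute $d_g^{\ast}$: since $d_g$ is a Jacobian derivation, $d_g^{\ast} = 0$. On the other hand, $(fd)^{\ast} = fd^{\ast} + d(f)$, so $d_g^{\ast} = (h^{-1}\overline{g}\,d)^{\ast} = h^{-1}\bigl(\overline{g}\,d^{\ast} + d(\overline{g})\bigr) = h^{-1}d(\overline{g})$, using $d^{\ast}=0$. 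Therefore $d(\overline{g}) = 0$, that is, $\overline{g}\in K[x,y]^d$.

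The main obstacle is now to pass from "$\overline{g}\in K[x,y]^d$" to "$\overline{g}\in K[x^p,y^p]$" — this is where the minimality of $g-g_{(p)}$ must be used, since in general $K[x,y]^d$ contains far more than $K[x^p,y^p]$. The plan is to decompose $\overline{g} = \overline{g}_{(p)} + (\overline{g} - \overline{g}_{(p)})$; by the remarks preceding Lemma~\ref{l3}, $\overline{g}_{(p)}\in K[x^p,y^p]\subset K[x,y]^d$, so $\overline{g}-\overline{g}_{(p)}\in K[x,y]^d$ as well. If $\overline{g}-\overline{g}_{(p)}$ were nonzero, it would be a nonzero element of $K[x,y]^d$ of the same shape as $g-g_{(p)}$, and I would compare degrees: since $\overline{g}$ divides $\frac{\partial g}{\partial x}$ and $\frac{\partial g}{\partial y}$ (both of degree strictly less than $\deg g$, as $g\notin K[x^p,y^p]$ forces $g-g_{(p)}\ne 0$ and hence some partial is nonzero), we have $\deg\overline{g} < \deg g$, and passing to the $(p)$-reduced part does not increase the degree, so $\deg(\overline{g}-\overline{g}_{(p)}) \leqslant \deg\overline{g} < \deg g = \deg(g-g_{(p)})$ (the last equality because $g_{(p)}\in K[x^p,y^p]$ has degree divisible by $p$ while the top-degree part of $g-g_{(p)}$ survives — more carefully, one checks the minimal-degree element can be taken with $g_{(p)}=0$, or argues the degree bound directly). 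This would contradict the minimality of $g-g_{(p)}$ in $K[x,y]^d$. Hence $\overline{g}-\overline{g}_{(p)}=0$, i.e.\ $\overline{g} = \overline{g}_{(p)} \in K[x^p,y^p]$, which is part~\textbf{b)}. The delicate point to get right in the full write-up is the degree comparison, ensuring that "$g-g_{(p)}$ minimal nonzero" genuinely forbids the smaller-degree element $\overline{g}-\overline{g}_{(p)}$ from being nonzero.
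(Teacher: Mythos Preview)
Your proposal is correct and follows essentially the same route as the paper's proof: apply Corollary~\ref{c1} with $n=2$ to get part~\textbf{a)}, then use the divergence identity $(\overline{g}d)^{\ast}=\overline{g}d^{\ast}+d(\overline{g})$ together with $d_g^{\ast}=0$ to conclude $d(\overline{g})=0$, and finish by a degree comparison against the minimality of $g-g_{(p)}$.

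The one wobble is the step $\deg g=\deg(g-g_{(p)})$, which you yourself flag as delicate. In fact this equality is false in general (take $g=x^py^p+x$). The clean fix, which is what the paper does, is to bypass $\deg g$ entirely: since $g_{(p)}\in K[x^p,y^p]$ has vanishing partials, $\frac{\partial g}{\partial x}=\frac{\partial}{\partial x}(g-g_{(p)})$, and hence whenever this partial is nonzero,
\[
\deg(\overline{g}-\overline{g}_{(p)})\leqslant\deg\overline{g}\leqslant\deg\frac{\partial g}{\partial x}=\deg\frac{\partial}{\partial x}(g-g_{(p)})<\deg(g-g_{(p)}).
\]
This gives the contradiction with minimality directly, without any claim about $\deg g$.
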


\begin{proof}
Observe that $d(g)=d(g-g_{(p)})=0$
and that $g$ is $p$-independent over $K[x^p,y^p]$,
so from Corollary~\ref{c1} we obtain that
$\overline{g}d=hd_g$ for some $h\in K[x,y]$
such that $h\sim\gcd(d(x),d(y))\sim 1$,
so $h$ is an invertible element of $K$.
Hence
$$d(\overline{g})=d(\overline{g})+\overline{g}d^{\ast}=
(\overline{g}d)^{\ast}=hd_g^{\ast}=0,$$
so $\overline{g}-\overline{g}_{(p)}\in K[x,y]^d$.
By the assumption, $g-g_{(p)}\neq 0$,
so $g\not\in K[x^p,y^p]$,
and then $\frac{\partial g}{\partial x}\neq 0$
or $\frac{\partial g}{\partial y}\neq 0$.
If, for example, $\frac{\partial g}{\partial x}\neq 0$,
then
$$\deg (\overline{g}-\overline{g}_{(p)})\leqslant
\deg \overline{g}\leqslant
\deg\frac{\partial g}{\partial x}=
\deg\frac{\partial}{\partial x}(g-g_{(p)})<
\deg (g-g_{(p)}).$$
By the minimality of $g-g_{(p)}$
we infer that $\overline{g}-\overline{g}_{(p)}=0$,
so $\overline{g}\in K[x^p,y^p]$.
\end{proof}

In the next proposition and Theorem~\ref{t2}
the results of \cite{phomogen} (Theorem~5.6)
and \cite{ontwovar} (Theorem~11, Corollary~12)
are generalized to arbitrary polynomial derivations
in two variables in positive characteristic.

\begin{pr}
\label{p1}
Let $K$ be a UFD of characteristic $p>0$.
Let $d$ be a nonzero $K$-derivation
of the polynomial $K$-algebra $K[x,y]$
such that $d(x)$ and $d(y)$ are coprime.
Consider a polynomial $f\in K[x,y]\setminus K[x^p,y^p]$.
The following conditions are equivalent:

\smallskip

\noindent
$(1)$ \
$K[x,y]^d=K[x^p,y^p,f]$,

\medskip

\noindent
$(2)$ \
$d=cd_f$ for some invertible element $c\in K$,

\medskip

\noindent
$(3)$ \
$d(f)=0$ and $\gcd\big(\frac{\partial f}{\partial x},
\frac{\partial f}{\partial y}\big)\sim 1$,

\medskip

\noindent
$(4)$ \
$d^{\ast}=0$, $f-f_{(p)}$ is a minimal nonzero polynomial
(with respect to an ordinary degree) belonging to $K[x,y]^d$
and $f-f_{(p)}$ is not divisible by any non-invertible
element of $K\setminus\{0\}$.
\end{pr}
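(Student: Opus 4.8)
The plan is to run a cycle $(1)\Rightarrow(2)\Rightarrow(3)\Rightarrow(1)$ built on Theorem~\ref{t1}, and then, with $(1),(2),(3)$ known to be equivalent, to attach $(4)$ through $(2)\Rightarrow(4)$ and $(4)\Rightarrow(3)$, the latter resting on Lemma~\ref{l3}. Put $A=K[x,y]$, $B=K[x^p,y^p]$, and for $g\in A$ write $\overline g=\gcd(\partial g/\partial x,\partial g/\partial y)=\dgcd(g)$, so that $(3)$ says $d(f)=0$ and $\overline f\sim1$.

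For $(1)\Rightarrow(2)$: if $A^d=B[f]$, then $f$ is a one-element $p$-basis of $A^d$ over $B$ --- it generates $A^d$ as a $B$-algebra, and it is $p$-independent over $B$ because $f\notin B=B_0\cap A$ forces the minimal polynomial of $f$ over $B_0$ to be $T^p-f^p$, whence $[B_0(f):B_0]=p$. By Theorem~\ref{t1}, $(1)\Rightarrow(2)$, $d=d_g$ for some $g\in A$; from $d_g(g)=0$ we get $g\in A^d=B[f]$, so $g=P(f)$ with $P\in B[T]$, $\deg P<p$, and the chain rule (the coefficients of $P$ lie in $B$, hence have zero partials) gives $d=d_g=P'(f)\,d_f$. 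Since $d\neq 0$, $P$ is nonconstant, so $P'(f)\neq 0$; comparing $\gcd(d(x),d(y))\sim P'(f)\,\overline f$ with the coprimality of $d(x),d(y)$ forces $P'(f)$ to be a unit of $A$, hence of $K$, which is $(2)$. For $(2)\Rightarrow(3)$: from $d=cd_f$ with $c\in K$ invertible we read off $d(f)=0$, $d(x)=-c\,\partial f/\partial y$, $d(y)=c\,\partial f/\partial x$, so $\gcd(d(x),d(y))\sim\overline f$, and coprimality gives $\overline f\sim1$. For $(3)\Rightarrow(1)$: now $f\in A^d$ with $\dgcd(f)\sim1$, which is precisely the input of the proof of $(3)\Rightarrow(1)$ in Theorem~\ref{t1}; that argument (via \cite{charpbases}, Theorem~4.4) shows $f$ is a $p$-basis of $A^d$ over $B$, so $A^d=B[f]=K[x^p,y^p,f]$.

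For $(2)\Rightarrow(4)$: $d=cd_f$ is Jacobian, so $d^*=0$; also $d(f)=0$, so $f-f_{(p)}\in A^d$, and it is nonzero since $f\notin B$. If $a\in K\setminus\{0\}$ divided $f-f_{(p)}$, it would divide $\partial f/\partial x$ and $\partial f/\partial y$, hence $\overline f\sim1$, so $a$ would be invertible --- this gives the non-divisibility clause. For minimality, use $A^d=B[f]$ (available from $(1)$): any $w\in A^d$ is $Q(f)$ with $Q\in B[T]$, $\deg Q<p$, and $w\notin B$ iff $Q$ is nonconstant iff $Q'(f)\neq 0$; then $\partial(w-w_{(p)})/\partial x=Q'(f)\,\partial f/\partial x$ and likewise in $y$, so, using the elementary identity $\deg g=1+\max(\deg\partial g/\partial x,\deg\partial g/\partial y)$ valid for every nonzero $g$ with $g_{(p)}=0$ (the top homogeneous form of such a $g$ is not a polynomial in $x^p,y^p$, hence is not killed by both partials), we get $\deg(w-w_{(p)})=\deg Q'(f)+\deg(f-f_{(p)})\geqslant\deg(f-f_{(p)})$. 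For $(4)\Rightarrow(3)$: from $f-f_{(p)}\in A^d$ we get $d(f)=0$, and $f-f_{(p)}$ is a nonzero element of $A^d$ not in $B$, so $A^d\neq B$ and Lemma~\ref{l3} applies with $g=f$, yielding $\overline f\,d=hd_f$ with $h\in K$ invertible and $\overline f\in B$. Since $A$ is free over $B$ on the monomials $x^iy^j$, $0\leqslant i,j<p$, divisibility by $\overline f\in B$ is coordinate-wise; as $\overline f$ divides $\partial f/\partial x=\partial(f-f_{(p)})/\partial x$ and $\partial f/\partial y$, whose coordinates are, up to the units $i,j$, the coordinates $a_{ij}$ of $f-f_{(p)}$ with $(i,j)\neq(0,0)$ while the $(0,0)$-coordinate of $f-f_{(p)}$ is $0$, we get $\overline f\mid f-f_{(p)}$; write $f-f_{(p)}=\overline f\,w$. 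Then $d$ kills $\overline f$ (it lies in $B$) and $f-f_{(p)}$, hence kills $w$, so $w-w_{(p)}\in A^d$ with $\deg(w-w_{(p)})\leqslant\deg w=\deg(f-f_{(p)})-\deg\overline f$; if $\deg\overline f>0$ this contradicts the minimality of $f-f_{(p)}$ (and $w-w_{(p)}=0$ is impossible, as then $w\in B$ and $f-f_{(p)}=\overline f\,w\in B$). Hence $\deg\overline f=0$, i.e.\ $\overline f\in K\setminus\{0\}$, and the non-divisibility clause of $(4)$ forces $\overline f\sim1$; with $d(f)=0$ this is $(3)$.

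The step I expect to be the main obstacle is $(4)\Rightarrow(3)$ --- concretely, upgrading Lemma~\ref{l3}'s output $\overline f\in B$ to $\overline f\sim1$. The bridge is the divisibility $\overline f\mid f-f_{(p)}$ (which uses freeness of $A$ over $B$) together with the remark that dividing $f-f_{(p)}$ by $\overline f$ yields another element of $A^d$ of the same shape but strictly smaller degree unless $\overline f$ is already a unit, so minimality forces $\deg\overline f=0$; only then does the auxiliary non-divisibility clause of $(4)$ --- itself proven in $(2)\Rightarrow(4)$ --- finish the argument. A subsidiary point worth stating separately is the degree identity $\deg g=1+\max(\deg\partial_x g,\deg\partial_y g)$ for polynomials with vanishing $(p)$-part, which underlies the minimality clause in $(2)\Rightarrow(4)$.
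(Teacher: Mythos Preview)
Your proof is correct. The cycle $(1)\Leftrightarrow(2)\Leftrightarrow(3)$ is what the paper means by ``follows from the proof of Theorem~\ref{t1}''; you merely spell out the details (the chain-rule step $d_g=P'(f)d_f$ for $g=P(f)\in B[f]$) that the paper leaves implicit. The treatment of $(4)$, however, differs on both sides. For $(2)\Rightarrow(4)$ the paper picks an auxiliary $g$ with $g-g_{(p)}$ minimal, applies Lemma~\ref{l3} to get $\overline g\,d=h\,d_g$, and from $d_{c\overline g f-hg}=0$ obtains $c\overline g(f-f_{(p)})=h(g-g_{(p)})$, whence $\overline g\in K$ and minimality passes from $g$ to $f$. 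You instead use the already established $(1)$ to write every constant as $Q(f)$ and read off minimality from your degree identity, yielding $\deg(w-w_{(p)})=\deg Q'(f)+\deg(f-f_{(p)})$. For the converse the paper proves $(4)\Rightarrow(2)$: it invokes Corollary~\ref{c3} to realize $d=d_h$ as Jacobian, then from $d_{c'f-\overline f h}=0$ gets $c'(f-f_{(p)})=\overline f(h-h_{(p)})$ and hence $\overline f\in K$. You prove $(4)\Rightarrow(3)$ directly: once Lemma~\ref{l3} gives $\overline f\in B$, freeness of $A$ over $B$ turns $\overline f\mid\partial_x f,\partial_y f$ into $\overline f\mid f-f_{(p)}$, and minimality forces $\deg\overline f=0$. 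Your route is more elementary --- it avoids Corollary~\ref{c3} and the auxiliary comparison element in each direction --- while the paper's route has the virtue of treating both implications through the same device $d_u=d_v\Rightarrow u-v\in B$, making them visibly symmetric. Your degree identity (for nonzero $g$ with $g_{(p)}=0$ the leading form also has vanishing $(p)$-part, hence is not killed by both partials) is correct and worth recording, as it is not stated in the paper.
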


\begin{proof}
The equivalence of conditions $(1)$, $(2)$ and $(3)$
follows from the proof of Theorem~\ref{t1}
in the case of $n=2$.

\medskip

\noindent
$(2)\Rightarrow (4)$ \
Assume that $d=cd_f$ for some invertible element $c\in K$.
Note that $d^{\ast}=0$ and $K[x,y]^d\neq K[x^p,y^p]$.
Let $g$ be as in Lemma~\ref{l3}, so $\overline{g}d=hd_g$,
where $h$ is an invertible element of $K$.
Hence
$$d_{c\overline{g}f}=c\overline{g}d_f=\overline{g}d=
hd_g=d_{hg},$$
so $d_{c\overline{g}f-hg}=0$
and $c\overline{g}f-hg\in K[x^p,y^p]$.
Thus $$c\overline{g}f-hg=
(c\overline{g}f-hg)_{(p)}=
c\overline{g}f_{(p)}-hg_{(p)},$$
so $c\overline{g}(f-f_{(p)})=h(g-g_{(p)})$.
By the minimality of $g-g_{(p)}$ we obtain that
$\overline{g}\in K\setminus\{0\}$
and that $f-f_{(p)}$ is minimal in $K[x,y]^d$.

\medskip

Now, observe that if $f-f_{(p)}$ is divisible by
some non-invertible element $a\in K\setminus\{0\}$,
then $\frac{\partial f}{\partial x}$
and $\frac{\partial f}{\partial y}$
are also divisible by $a$, contrary to
the assumption that $d(x)$ and $d(y)$ are coprime.

\medskip

\noindent
$(4)\Rightarrow (2)$ \
Assume that $d^{\ast}=0$ and that $f-f_{(p)}$
is minimal nonzero polynomial in $K[x,y]^d$,
not divisible by any non-invertible element
of $K\setminus\{0\}$.
We have then $K[x,y]^d\neq K[x^p,y^p]$,
so, by Corollary~\ref{c3},
$d=d_h$ for some $h\in K[x,y]^d\setminus K[x^p,y^p]$.
From Lemma~\ref{l3} we obtain that $\overline{f}\in K[x^p,y^p]$,
where $\overline{f}=\gcd\big(\frac{\partial f}{\partial x},
\frac{\partial f}{\partial y}\big)$.
By Corollary~\ref{c1} we have
$\overline{f}d_h=c'd_f$, where $c'\sim 1$.
Hence $d_{\overline{f}h}=d_{c'f}$,
so $d_{c'f-\overline{f}h}=0$
and $c'f-\overline{f}h\in K[x^p,y^p]$.
Thus $$c'f-\overline{f}h=
(c'f-\overline{f}h)_{(p)}=
c'f_{(p)}-\overline{f}h_{(p)},$$
so $c'(f-f_{(p)})=\overline{f}(h-h_{(p)})$.
By the minimality of $f-f_{(p)}$ we obtain that
$\overline{f}\in K\setminus\{0\}$.
Then, by the assumption, $\overline{f}$ is invertible.
Finally, $d=d_h=cd_f$ for $c=\overline{f}^{-1}c'$.
\end{proof}

\begin{tw}
\label{t2}
Let $K$ be a UFD of characteristic $p>0$.
Let $d$ be a nonzero $K$-derivation
of the polynomial $K$-algebra $K[x,y]$
such that $d(x)$ and $d(y)$ are coprime.
The following conditions are equivalent:

\smallskip

\noindent
$(1)$ \
$K[x,y]^d=K[x^p,y^p,f]$ for some polynomial
$f\in K[x,y]\setminus K[x^p,y^p]$,

\medskip

\noindent
$(2)$ \
$d$ is a Jacobian derivation,

\medskip

\noindent
$(3)$ \
$d^{\ast}=0$ and $K[x,y]^d\neq K[x^p,y^p]$,

\medskip

\noindent
$(4)$ \
$d(f)=0$ and $d(\overline{f})=0$
for some polynomial $f\in K[x,y]\setminus K[x^p,y^p]$,
where $\overline{f}=\gcd\big(\frac{\partial f}{\partial x},
\frac{\partial f}{\partial y}\big)$.
\end{tw}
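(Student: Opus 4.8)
The plan is to establish the three equivalences $(1)\Leftrightarrow(2)$, $(2)\Leftrightarrow(3)$ and $(3)\Leftrightarrow(4)$ separately, each time specializing to two variables a tool already in hand: Proposition~\ref{p1} for the first, Corollary~\ref{c3} for the second, and Lemma~\ref{l3} together with Corollary~\ref{c1} for the third. Throughout write $A=K[x,y]$ and $B=K[x^p,y^p]$.

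For $(1)\Leftrightarrow(2)$ no new computation is needed. If $A^d=B[f]$ for some $f\in A\setminus B$, then Proposition~\ref{p1} applied to this very $f$ gives $d=cd_f$ with $c\in K$ invertible, so $d=d_{cf}$ is a Jacobian derivation. Conversely, if $d$ is Jacobian, write $d=d_f$; since $d\neq 0$ necessarily $f\notin B$ (otherwise both partial derivatives of $f$ vanish and $d=0$), so Proposition~\ref{p1} applied to this $f$ with $c=1$ returns $A^d=B[f]$. For $(2)\Rightarrow(3)$: a Jacobian derivation has zero divergence, and writing $d=d_f$ we have $d(f)=\jac(f,f)=0$ while $f\notin B$, so $A^d\neq B$. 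The reverse $(3)\Rightarrow(2)$ is precisely Corollary~\ref{c3}.

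The substance of the theorem lies in $(3)\Leftrightarrow(4)$, i.e.\ in relating $d$ to the Jacobian derivation of a single nontrivial constant. For $(3)\Rightarrow(4)$ I would apply Lemma~\ref{l3}: since $A^d\neq B$ there is a polynomial $g$ as in that lemma (so $g-g_{(p)}$ is a minimal nonzero element of $A^d$); then $d(g)=0$, $g\notin B$, and part~b) gives $\overline g\in B$, whence $d(\overline g)=0$, so $f=g$ witnesses $(4)$. For $(4)\Rightarrow(3)$, the hypothesis $f\notin B$ forces $\overline f=\dgcd(f)\neq 0$, hence $f$ is $p$-independent over $B$, so Corollary~\ref{c1} yields $\overline f\,d=h\,d_f$ with $h\sim\gcd(d(x),d(y))\sim 1$, i.e.\ $h$ invertible in $K$. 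Now compare divergences of both sides: on the left, $(\overline f\,d)^{\ast}=\overline f\,d^{\ast}+d(\overline f)=\overline f\,d^{\ast}$ by the assumption $d(\overline f)=0$; on the right, $(h\,d_f)^{\ast}=h(d_f)^{\ast}+d_f(h)=0$ since $d_f$ is Jacobian and $h\in K$. As $\overline f\neq 0$ this gives $d^{\ast}=0$, and $f\in A^d\setminus B$ gives $A^d\neq B$, so $(3)$ holds.

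The one step that uses a hypothesis in an essential way — and hence the point to be careful about — is $(4)\Rightarrow(3)$: it is crucial that $d$ annihilate $\overline f$ and not merely $f$, since otherwise the divergence of $\overline f\,d$ is only $\overline f\,d^{\ast}+d(\overline f)$, and the vanishing of that sum does not force $d^{\ast}=0$. Everything else is routine: the identities $d_{cf}=cd_f$ and $(\psi d)^{\ast}=\psi d^{\ast}+d(\psi)$, the translation of ``$d(x),d(y)$ coprime'' into ``$h\sim 1$'', and the standard fact that $f\in K[x^p,y^p]$ iff both partial derivatives of $f$ vanish iff $\dgcd(f)=0$.
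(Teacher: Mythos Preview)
Your proof is correct and follows the paper's approach: Proposition~\ref{p1} for $(1)\Leftrightarrow(2)$, Corollary~\ref{c3} for $(2)\Leftrightarrow(3)$, and Corollary~\ref{c1} together with the divergence identity for $(4)\Rightarrow(3)$. The only cosmetic difference is that the paper closes the cycle via $(1)\Rightarrow(4)$ (Proposition~\ref{p1} yields $\overline f\sim 1$, so $d(\overline f)=0$ trivially) rather than your $(3)\Rightarrow(4)$ via Lemma~\ref{l3}; since Proposition~\ref{p1} itself rests on Lemma~\ref{l3}, the substance is identical.
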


\begin{proof}
The equivalence $(1)\Leftrightarrow (2)$
and the implication $(1)\Rightarrow (4)$
follow from Proposition~\ref{p1}.
The implication $(3)\Rightarrow (2)$
follows from Corollary~\ref{c3}.
The implication $(2)\Rightarrow (3)$ is obvious.

\medskip

\noindent
$(4)\Rightarrow (3)$ \
Assume that $d(f)=0$ and $d(\overline{f})=0$
for some $f\in K[x,y]\setminus K[x^p,y^p]$.
By Corollary~\ref{c1} we have
$\overline{f}d=cd_f$, where $c\sim 1$, so
$$\overline{f}d^{\ast}=\overline{f}d^{\ast}+d(\overline{f})=
(\overline{f}d)^{\ast}=cd_f^{\ast}=0,$$
that is, $d^{\ast}=0$.
\end{proof}

\medskip

\noindent
{\bf Final remarks.}
Note that the equivalence $(3)\Leftrightarrow (4)$
in Theorem~\ref{t2} can be easily generalized
for arbitrary $n$ in the following way.
If $d$ is an irreducible $K$-derivation of $K[x_1,\dots,x_n]$
and $f_1,\dots,f_{n-1}\in K[x_1,\dots,x_n]^d$
are $p$-independent over $K[x_1^p,\dots,x_n^p]$,
then $d^{\ast}=0\Leftrightarrow d(\overline{F})=0$.
Hence, there is a natural question about a counter-example
to the implication $(3)\Rightarrow (2)$
in such a generalization.

\medskip

Jacobian derivations in characteristic zero
have many nontrivial properties
(see \cite{Daigle}, \cite{MakarLimanov},
sections 3.2 and 3.4 in \cite{Freudenburg}).
Remark that the proof of Corollary~\ref{c2},
based on the properties of divergence,
after an easy adaptation works also
in characteristic $0$.

\end{document}